% ------------------------------------------------------------------------
% bjourdoc.tex for birkjour.cls*******************************************
% ------------------------------------------------------------------------
%%%%%%%%%%%%%%%%%%%%%%%%%%%%%%%%%%%%%%%%%%%%%%%%%%%%%%%%%%%%%%%%%%%%%%%%%%

\documentclass{birkjour}
%
%
% THEOREM Environments (Examples)-----------------------------------------
%
 \newtheorem{thm}{Theorem}[section]
 \newtheorem{cor}[thm]{Corollary}
 \newtheorem{lem}[thm]{Lemma}
 
 \theoremstyle{definition}
 \newtheorem{defn}[thm]{Definition}
 \theoremstyle{rem}
 \newtheorem{rem}[thm]{Remark}
 
 \numberwithin{equation}{section}
 
%\input{$HOME/Dropbox/MAIN/WRITINGS/CommonResources/paper_defns}
%\input{paper_defns}

% BASIC COMMANDS (environments etc.)
\newcommand{\bc}{\begin{center}}
\newcommand{\ec}{\end{center}}
\newcommand{\bt}{\begin{tabular}}
\newcommand{\et}{\end{tabular}} 
\newcommand{\bea}{\begin{eqnarray}}
\newcommand{\eea}{\end{eqnarray}}
\newcommand{\bean}{\begin{eqnarray*}}
\newcommand{\eean}{\end{eqnarray*}}

\newcommand{\ba}{\begin{array}}
\newcommand{\ea}{\end{array}}

\def\be{\begin{eqnarray}}
\def\ee{\end{eqnarray}}
\def\ben{\begin{eqnarray*}}
\def\een{\end{eqnarray*}}

% COMMON ABBREVIATIONS

%\renewcommand{\etal}{et.\ al.\ }

% RELATIONAL SYMBOLS

\newcommand{\ra} {\rightarrow}

% UNIVERSALLY USED ALPHABETIC SYMBOLS

\newcommand{\nth}{\frac{1}{n}}

\newcommand{\RL}{{\mathbb R}}

% STATISTICS SYMBOLS

% TEXT-SYMBOLS

\newcommand{\VAR}{\mbox{\rm Var}}

% PAPER-SPECIFIC SYMBOLS

% LOSSY MDL SYMBOLS

% CPA SYMBOLS

%\newcommand{\phat}{\hat{p}}

\newcommand{\lam}{\lambda}

% \newcommand{\th}{\theta}
% \newcommand{\product}{\mathop{\mbox{\huge $\prod$}}}

%%%%%%%%%%%%%%%%%%%%%%%%%%%%%%%%%%%%%%%%%%%%
%%%%%%%%%%%%%%%%%%%%%%%%%%  SEAN's STUFF  %%%%%%%%%
%%%%%%%%%%%%%%%%%%%%%%%%%%%%%%%%%%%%%%%%%%%%

%%%%%%%%%%%%%%%%%%%%% Symbols.tex

%%%%%NEW LABELS

  %for final copy

\def\elabel#1{\label{e:#1}}

%%%%%%%%%%%%%%%%%%%%% qed
%
 
%\def\sq{\hbox{\rlap{$\sqcap$}$\sqcup$}}
\def\sq{$\Box$}

\def\qed{\ifmmode\sq\else{\unskip\nobreak\hfil
\penalty50\hskip1em\null\nobreak\hfil\sq
\parfillskip=0pt\finalhyphendemerits=0\endgraf}\fi\par\medbreak}

%%%%%%%%%%%%%%%%%%%%%%%%%%%%%%%%% MATH OPERATORS

\newsavebox{\junk}
\savebox{\junk}[1.6mm]{\hbox{$|\!|\!|$}}

%%%%%%%%%%%%%%%%%%%%%%%%%%%%%%%%%%%%%%% SPACES

%%%%%%%%%%%%%%%%%%%%%%%%% CHECKED CHARACTERS

%\newcommand{\cE}{\check\Expect}
%\newcommand{\cF}{\check F}
%\newcommand{\cQ}{\check Q}

%\newcommand{\cI}{{\check{I}}}

%\newcommand{\cM}{{\check{M}}}

%%%%%%%%%%%%%%%%%%%%%%%%%%%%%%%%%%%%%%%%%%%%%%%%%%%%%%%

%%%%%%%%%%%%%%%%%%%%%%%%%%%%%%%%%%%%%%%%%%%%%%%%%%%%%%%% 

%%%%%%%%%%%%%%%%%%%%%%%%%%%%%% BF PREFIXES

% Bold Math

% \ha prefixes

%  Wide hat

%  Hat

% \rm prefixes

% \til prefixes

% Wide tilde

\def\til={{\widetilde =}}

% Tilde

%Caligraphy

 \def\eq#1/{(\ref{#1})}

%%%%%%%%%%THEOREMS and EQUATIONS

%\newtheorem{theorem}{Theorem}[section]
%\newtheorem{corollary}[theorem]{Corollary}
%% \newtheorem{corollary}{Corollary}[section]
%\newtheorem{proposition}[theorem]{Proposition}
%\newtheorem{lemma}[theorem]{Lemma}
%

\def\eq#1/{(\ref{e:#1})}

\newcommand{\beqn}[1]{\notes{#1}%
\begin{eqnarray} \elabel{#1}}

\newcommand{\eeqn}{\end{eqnarray} }

\newcommand{\beq}[1]{\notes{#1}%
\begin{equation}\elabel{#1}}

\newcommand{\eeq}{\end{equation}} 

\def\bdes{\begin{description}}
\def\edes{\end{description}}

%%%%%%%%%%%%%%%%%%%%%%%%%%%%  \bar prefixes (improved?)

%\newcommand{\oo}{\overline}

%\def\proof{\noindent{\sc Proof. }} 
% \def\proof{\paragraph{\sc Proof. }} 
%\def\proofo{\noindent{\sc Proof Outline. }} 
% \def\proofo{\paragraph{\sc Proof Outline. }} 
 
\def\notes#1{}

%%% NEW DEFNS

% \def\eV#1{$e^{\hbox{\small (V#1)}}$}

\def\h{{\widetilde h}}
\def\Var{{\rm Var}}
\newcommand{\R}{\RL}
\newcommand{\E}{\mathbf{E}}
\renewcommand{\P}{\mathbf{P}}

\def\bee{\begin{eqnarray*}}
\def\ene{\end{eqnarray*}}

\begin{document}

%-------------------------------------------------------------------------
% editorial commands: to be inserted by the editorial office
%
%\firstpage{1} \volume{228} \Copyrightyear{2004} \DOI{003-0001}
%
%
%\seriesextra{Just an add-on}
%\seriesextraline{This is the Concrete Title of this Book\br H.E. R and S.T.C. W, Eds.}
%
% for journals:
%
%\firstpage{1}
%\issuenumber{1}
%\Volumeandyear{1 (2004)}
%\Copyrightyear{2004}
%\DOI{003-xxxx-y}
%\Signet
%\commby{inhouse}
%\submitted{March 14, 2003}
%\received{March 16, 2000}
%\revised{June 1, 2000}
%\accepted{July 22, 2000}
%
%
%
%---------------------------------------------------------------------------
%Insert here the title, affiliations and abstract:
%

\title[Concentration of Information]
 {Optimal Concentration of Information Content for Log-Concave Densities}

%----------Author 1
\author[Fradelizi]{Matthieu~Fradelizi}

\address{%
Universit\'e Paris-Est Marne-la-Vall\'ee\\
Laboratoire d'Analyse et de Mathématiques Appliquées UMR 8050\\
5 Bd Descartes, Champs-sur-Marne\\
77454 Marne-la-Vallée Cedex 2\\
France}

\email{matthieu.fradelizi@univ-mlv.fr}

\thanks{This work was partially supported by the project GeMeCoD ANR 2011 BS01 007 01,
and by the U.S. National Science Foundation through the grant DMS-1409504 (CAREER).
A significant portion of this paper is based on the Ph.D. dissertation of L. Wang \cite{Wan14:phd}, co-advised by M. Madiman
and N. Read, at Yale University}
%----------Author 2
\author[Madiman]{Mokshay~Madiman}
\address{University of Delaware\\
Department of Mathematical Sciences\\
501 Ewing Hall \\
Newark, DE 19716\\
USA}
\email{madiman@udel.edu}
%----------Author 3
\author[Wang]{Liyao~Wang}
\address{J. P. Morgan\\
New York NY\\
USA}
\email{njuwangliyao@gmail.com}
%----------classification, keywords, date
\subjclass{Primary 52A40; Secondary 60E15, 94A17}

\keywords{concentration, information, log-concave, varentropy}

\date{March 1, 2015}
%----------additions
%\dedicatory{To my boss}
%%% ----------------------------------------------------------------------

\begin{abstract}
An elementary proof is provided of sharp bounds for the varentropy of
random vectors with log-concave densities, as well as for deviations of the
information content from its mean. These bounds significantly improve on the bounds
obtained by Bobkov and Madiman ({\it Ann. Probab.}, 39(4):1528--1543, 2011).
\end{abstract}

%%% ----------------------------------------------------------------------
\maketitle
%%% ----------------------------------------------------------------------
%\tableofcontents
\section{Introduction}
\label{sec:intro}

Consider a random vector $Z$ taking values in $\RL^n$, drawn from the standard Gaussian distribution $\gamma$,
whose density is given by
\ben
\phi(x) = \frac{1}{(2\pi)^{\frac{n}{2}}} e^{-\frac{|x|^2}{2}}
\een
for each $x\in\RL^n$, where $|\cdot|$ denotes the Euclidean norm.
It is well known that when the dimension $n$ is large, the distribution of $Z$ is highly concentrated around the sphere of radius $\sqrt{n}$;
that $\sqrt{n}$ is the appropriate radius follows by the trivial observation that $\E |Z|^2= \sum_{i=1}^n \E Z_i^2 = n$.
One way to express this concentration property is by computing the variance of $|Z|^2$, which is easy to do using the independence
of the coordinates of $Z$:
\ben\begin{split}
\Var(|Z|^2) 
&= \Var\bigg( \sum_{i=1}^n Z_i^2 \bigg) 
= \sum_{i=1}^n \Var(Z_i^2)
= 2n .
%&= \E [ |Z|^4 ] - \E [ |Z|^2 ]^2 \\
%%&= \E [ (\sum_{i=1}^n Z_i^2)^2 + n^2 - 2n \sum_{i=1}^n \E Z_i^2 ] \\
%&= \sum_{i=1}^n \E (Z_i^4) + 2 \sum_{i<j} \E (Z_i^2 Z_j^2) -n^2 \\
%&= 3n + n(n-1) - n^2 =2n .
\end{split}\een
In particular, the standard deviation of $|Z|^2$ is $\sqrt{2n}$, which is much smaller than the mean $n$ of $|Z|^2$
when $n$ is large. Another way to express this concentration property is through a deviation inequality:
\be\label{eq:chernoff}
\P \bigg\{ \frac{|Z|^2}{n} -1 >t  \bigg\} \leq  \exp\bigg\{ -\frac{n}{2} [t-\log(1+t)] \bigg\}
\ee
for the upper tail, and a corresponding upper bound on the lower tail. These inequalities immediately follow
from Chernoff's bound, since $|Z|^2/n$ is just the empirical mean of i.i.d. random variables.

It is natural to wonder if, like so many other facts about Gaussian measures, the above concentration property also
has an extension to log-concave measures (or to some subclass of them). There are two ways one may think about
extending the above concentration property. One is to ask if there is a universal constant $C$ such that
\ben
\Var(|X|^2) \leq Cn ,
\een
for every random vector $X$ that has an isotropic, log-concave distribution on $\RL^n$. Here, we say that a distribution
on $\RL^n$ is isotropic if its covariance matrix is the identity matrix; this assumption ensures that $\E |X|^2 =n$, and
provides the normalization needed to make the question meaningful. This question has been well studied in the literature,
and is known as the ``thin shell conjecture'' in convex geometry. It is closely related to other famous conjectures: it implies
the hyperplane conjecture of Bourgain \cite{EK11, EL14}, is trivially implied by the Kannan-Lovasz-Simonovits conjecture,
and also implies the Kannan-Lovasz-Simonovits conjecture up to logarithmic terms \cite{Eld13}.
The best bounds known to date are those of Gu\'edon and E.~Milman \cite{GM11}, and assert that
\ben
\Var(|X|^2) \leq Cn^{4/3} .
\een

The second way that one may try to extend the above concentration property from Gaussians to log-concave measures
is to first observe that the quantity that concentrates, namely $|Z|^2$, is essentially the logarithm of the Gaussian density function.
More precisely, since
\ben
-\log\phi(x) = \frac{n}{2}\log(2\pi) + \frac{|x|^2}{2} ,
\een
the concentration of $|Z|^2$ about its mean is equivalent to the concentration of $-\log\phi(Z)$ about its mean.
Thus one can ask if, for every random vector $X$ that has a log-concave density $f$ on $\RL^n$,
\be\label{eq:varbd}
\Var(-\log f(X)) \leq Cn
\ee
for some absolute constant $C$. An affirmative answer to this question was provided by Bobkov and Madiman \cite{BM11:aop}.
The approach of \cite{BM11:aop} can be used to obtain bounds on $C$, %(although they did not bother to do so in their paper), 
but the bounds so obtained are quite suboptimal (around 1000). Recently V.~H.~Nguyen \cite{Ngu13:phd} (see also \cite{Ngu14:1}) and L.~Wang \cite{Wan14:phd}
independently determined, in their respective Ph.D. theses, that the sharp constant $C$ in the bound \eqref{eq:varbd}  is 1. 
Soon after this work, simpler proofs of the sharp variance bound were obtained independently by
us (presented in the proof of Theorem~\ref{thm:varent} in this paper) and by Bolley, Gentil and Guillin \cite{BGG15}
(see Remark 4.2 in their paper). An advantage of our proof over the others mentioned 
is that it is very short and straightforward, and emerges as a consequence of a more basic 
log-concavity property (namely Theorem~\ref{thm:lc}) of $L^p$-norms of log-concave functions,
which may be thought of as an analogue for log-concave functions of a classical inequality of Borell \cite{Bor73a} for
concave functions.

If we are interested in finer control of the integrability of $-\log f(X)$, we may wish to consider
analogues for general log-concave distributions of the inequality \eqref{eq:chernoff}. Our
second objective in this note is to provide such an analogue (in Theorem~\ref{thm:lc-dev}). A weak version of such a statement
was announced in \cite{BM11:cras} and proved in \cite{BM11:aop}, but the bounds we provide in this note are much stronger.
Our approach has two key advantages: first, the proof is transparent and completely avoids the use of the
sophisticated Lovasz-Simonovits localization lemma, which is a key ingredient of the approach
in \cite{BM11:aop}; and second, our bounds on the moment generating function are sharp, and are attained 
for example when the distribution under consideration has i.i.d. exponentially distributed marginals.

While in general exponential deviation inequalities imply variance bounds, the reverse
is not true. Nonetheless, our approach in this note is to first prove the variance bound
\eqref{eq:varbd}, and then use a general bootstrapping result (Theorem~\ref{thm:mainconcentrate})
to deduce the exponential deviation inequalities from it. The bootstrapping result is of independent interest;
it relies on a technical condition that turns out  to be automatically satisfied when
the distribution in question is log-concave. 

Finally we note that many of the results in this note can be extended to the class of
convex measures; partial work in this direction is done by Nguyen \cite{Ngu14:1},
and results with sharp constants are obtained in the forthcoming paper \cite{FLM15}.

%Before we proceed,
%The {\it varentropy} of a random vector $X$ is defined as \, $V(X)\,=\,\VAR(\h(X))$

\section{Optimal varentropy bound for log-concave distributions}
\label{sec:var}

%In this note, we prove Theorem 1 and Theorem 2 in the main draft. In particular, Theorem 1 is proved in Theorem~\ref{thm:mainbound} and Theorem 2 is proved in Theorem~\ref{thm:mainconcentrate}.
%
%For the proof, we will use probabilistic language. For a random vector $X$ in $\mathbb{R}^n$ with density $f$,
%we write $X\sim f$, and use $\mathbf{E}[g(X)]$ to mean integration with respect to $f$ :
%\bee
%\mathbf{E}[g(X)]=\int g(x)f(x)dx,
%\ene
%where $g$ is any integrable function on $\mathbb{R}^n$ with respect to $f$.
%
%When the random vector $X$ is clear from the context, we will sometimes write $\mathbf{E}[g]$. For example, during the course of the proof of Lemma~\ref{lem:key}, we used the notation $\mathbf{E}_l$ to mean integration with respect to a one dimensional density $f_l$ and we wrote in a number of places expressions like $\mathbf{E}_l[g]$, by which we mean
%\bee
%\mathbf{E}_l[g]=\int g(x)f_l(x)dx.
%\ene

Before we proceed, we need to fix some definitions and notation.

\begin{defn}
Let a random vector $X$ taking values in $\RL^n$ have probability density
function $f$.
The {\it information content} of $X$ is the random variable $\h(X)=-\log f(X)$.
The {\it entropy} of $X$ is defined as \, $h(X)\,=\,\E(\h(X))$.
The {\it varentropy} of a random vector $X$ is defined as \, $V(X)\,=\,\VAR(\h(X))$.
\end{defn}

Note that the entropy and varentropy depend not on the realization of $X$
but only on its density $f$,
whereas the information content does indeed depend on the realization of $X$.
For instance, one can write $h(X)=-\int_{\RL^n} f\log f$ and 
\ben
V(X)= \Var(\log f(X))= \int_{\RL^n} f(\log f)^2 - \bigg(\int_{\RL^n} f\log f\bigg)^2 .
\een
Nonetheless, for reasons of convenience and in keeping with historical convention,
we slightly abuse notation as above.

%\begin{rem}
As observed in \cite{BM11:aop}, the distribution of the difference $\h(X) - h(X)$
is invariant under any affine transformation of $\RL^n$
(i.e., $\h(TX) - h(TX) = \h(X) - h(X)$
for all invertible affine maps $T:\R^n \rightarrow \R^n$);
hence the varentropy $V(X)$ is affine-invariant while the entropy $h(X)$ is not.
%In particular, any pushforward by an affine map of the extremal distributions
%identified in Remark~\ref{rem:extremal} is also extremal
%in Theorem~\ref{thm:varent}.
%\end{rem}

Another invariance for both $h(X)$ and $V(X)$ follows from the fact that they only depend on the 
distribution of $\log(f(X))$, so that they are unchanged if $f$ is modified in such a way that its 
sublevel sets keep the same volume. This implies  (see, e.g., \cite[Theorem 1.13]{LL01:book}) that if $\tilde{f}$ is the 
spherically symmetric, decreasing rearrangement of $f$, 
and $\tilde{X}$ is distributed according to the density $\tilde{f}$, then 
$h(X)=h(\tilde{X})$ and $V(X)=V(\tilde{X})$. The rearrangement-invariance of
entropy was a key element in the development of refined entropy power
inequalities in \cite{WM14}.

Log-concavity is a natural shape constraint for functions (in particular, probability density functions)
that represents an infinite-dimensional generalization of the class of Gaussian distributions.

\begin{defn}
A function $f:\RL^n\ra [0,\infty)$ is {\it log-concave} if $f$ can be written as
\bee
f(x)=e^{-U(x)},
\ene
where $U : \RL^n \mapsto (-\infty, +\infty]$ is a convex function, i.e.,
%\bee
$U(tx+(1-t)y)\leq tU(x)+(1-t)U(y)$
%\ene
for any $x$, $y$ and $0<t<1$. When $f$ is a probability density function
and is log-concave, we say that $f$ is a {\it log-concave density}.
\end{defn}

We can now state the optimal form of the inequality \eqref{eq:varbd},
first obtained by Nguyen \cite{Ngu13:phd} and Wang \cite{Wan14:phd}
as discussed in Section~\ref{sec:intro}.

\begin{thm}\cite{Ngu13:phd, Wan14:phd}\label{thm:varent}
Given a random vector $X$ in $\R^n$ with log-concave density $f$,
$$
V(X)\leq n
$$
\end{thm}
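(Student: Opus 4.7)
The plan is to deduce the varentropy bound from Theorem~\ref{thm:lc} by a simple second-derivative computation at $p=1$. First I would introduce the $L^p$-functional
$$L(p) \;=\; \int_{\RL^n} f(x)^p\,dx, \qquad p>0,$$
which satisfies $L(1)=1$ since $f$ is a density. A log-concave density is bounded above and has at least exponential tails, so for $p$ in a compact neighborhood of $1$ the integrands $f^p,\;f^p\log f,\;f^p(\log f)^2$ are all dominated by a common integrable function; dominated convergence then justifies two differentiations under the integral sign, giving $L'(p)=\int f^p\log f$ and $L''(p)=\int f^p(\log f)^2$. Evaluating at $p=1$ yields $L'(1)=-h(X)$ and $L''(1)=\E[(\log f(X))^2]$.

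The crucial identity is then
$$(\log L)''(1) \;=\; \frac{L''(1)L(1)-L'(1)^2}{L(1)^2} \;=\; \Var(\log f(X)) \;=\; V(X),$$
so $V(X)$ is exactly the second derivative at $p=1$ of $\log L(p)$. Next I would set
$$\Lambda(p) \;:=\; \log\!\bigl(p^{n}L(p)\bigr) \;=\; n\log p + \log L(p),$$
and invoke Theorem~\ref{thm:lc}: since $f$ is log-concave, $p\mapsto p^{n}L(p)$ is log-concave on $(0,\infty)$, hence $\Lambda$ is concave there. Its second derivative at $p=1$ equals $\Lambda''(1)=-n+(\log L)''(1)=V(X)-n$, and concavity forces $\Lambda''(1)\le 0$, so $V(X)\le n$.

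The main obstacle in this argument is \emph{already absorbed} into Theorem~\ref{thm:lc}, which serves as a structural replacement for the Lovász--Simonovits localization used in \cite{BM11:aop} and is presented in the excerpt as a log-concave analogue of Borell's inequality for concave functions. Once that theorem is available, the proof of the varentropy bound collapses to the one-line computation above, modulo the routine domination argument needed to differentiate $L$ twice at $p=1$. The sharpness of the constant $1$ is also apparent from this viewpoint: equality forces $\Lambda''(1)=0$, which is realized by a product of $n$ i.i.d.\ exponentials, for which an elementary calculation gives $p^{n}L(p)\equiv\text{const}$ in $p$.
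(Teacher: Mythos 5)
Your argument is correct and is essentially the paper's own proof: you set $\Lambda(p)=\log(p^n L(p))$, invoke Theorem~\ref{thm:lc} for concavity, and read off $V(X)\le n$ from $\Lambda''(1)\le 0$, which is exactly the computation the paper carries out (via Lemma~\ref{lem:formula} applied at $\alpha=1$). The only cosmetic difference is that the paper records $F''(\alpha)=V(X_\alpha)/\alpha^2$ for general $\alpha$ (needed later for the bootstrapping), whereas you only differentiate at $p=1$.
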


\begin{rem}
The probability bound {\it does not depend} on $f$-- it is universal over the class
of log-concave densities.
\end{rem}

\begin{rem}\label{rem:extremal}
The bound is {\it sharp}. 
%If $f(x)=e^{-\sum_{i=1}^n x_i} {\bf 1}_{\{x_1, \ldots, x_n \geq 0\}}$,
%then $V(X)= \VAR [\sum_{i=1}^n X_i] = n$ yields equality. Equality is also
%obtained for the distribution whose coordinates
%are i.i.d. and distributed according to the symmetrized exponential distribution,
%i.e., for $Y$ with density $f(y)=2^{-n} e^{-\sum_{i=1}^n |y_i|}$. Indeed,
%$V(Y)=\Var(n\log 2+\sum_{i=1}^n |Y_i|) = \sum_{i=1}^n \Var(|Y_i|) = \sum_{i=1}^n \Var(X_i) =n$.
Indeed, let $X$ have density $f=e^{-\varphi}$, with $\varphi:\R^n\to [0,\infty]$ being positively homogeneous of degree 1, i.e., 
such that $\varphi(tx)=t\varphi(x)$ for all $t>0$ and all $x\in\R^n$. 
Then one can check that the random variable $Y=\varphi(X)$ has a gamma distribution
with shape parameter $n$ and scale parameter 1, i.e., it is distributed according to the density given by
\ben
f_Y(t)=\frac{t^{n-1}e^{-t}}{(n-1)!} .
\een
%To see this, note that
%\ben\begin{split}
%P(Y\ge t)&=P(\varphi(X)\ge t)=\int_{\{x;\varphi(x)\ge t\}}e^{-\varphi(x)}dx \\
%&=\int_{\{x;\varphi(x)\ge t\}}\int_{\varphi(x)}^{+\infty}e^{-s}ds \\
%&=\int_t^{+\infty}e^{-s}vol(\{x;t\le\varphi(x)\le s\})ds .
%\end{split}\een
%Using homogeneity of $\varphi$, we deduce that
%\ben
%P(Y\ge t)=\int_t^{+\infty}e^{-s}(s^n-t^n) \text{vol}(\{x;\varphi(x)\le 1\})ds.
%\een
%Integrating by parts, we get
%\ben
%P(Y\ge t)=\int_t^{+\infty}e^{-s}ns^{n-1}ds \text{vol}(\{x;\varphi(x)\le 1\}).
%\een
%Since $Y\ge0$, taking $t=0$, we get that 
%\ben
%1=P(Y\ge 0)=\int_0^{+\infty}e^{-s}ns^{n-1}ds \text{vol}(\{x;\varphi(x)\le 1\})=n! \text{vol}(\{x;\varphi(x)\le 1\}).
%\een
%Thus 
%\ben
%P(Y\ge t)=\int_t^{+\infty}e^{-s}s^{n-1}ds/(n-1)!.
%\een
Consequently  $\E(Y)=n$ and $\E(Y^2)=n(n+1)$, and therefore $V(X)=\Var(Y)=n$.
%$\int \varphi e^{-\varphi}=n$ and $\int \varphi^2 e^{-\varphi}=n(n+1)$. One deduces that $V(X)=n$. 
Particular examples of equality include:
\begin{enumerate}
\item The case where $\varphi(x)=\sum_{i=1}^n x_i$ on the
cone of points with non-negative coordinates (which corresponds to $X$ having i.i.d. coordinates
with the standard exponential distribution), and 
\item The case where 
$\varphi(x)=%\|x\|_K:= 
\inf\{r> 0: x\in rK \}$ for some compact convex set $K$ containing the origin
(which, by taking $K$ to be a symmetric convex body, includes all norms on $\R^n$ suitably normalized so that $e^{-\varphi}$ is a density).
\end{enumerate}
\end{rem}

\begin{rem}
Bolley, Gentil and Guillin \cite{BGG15} in fact prove a stronger inequality, namely,
\ben
\frac{1}{V(X)}- \frac{1}{n}\geq \bigg[\E \big\{ \nabla U(X) \cdot \text{Hess}(U(X))^{-1} \nabla U(X) \big\} \bigg]^{-1} .
\een
This gives a strict improvement of Theorem~\ref{thm:varent}
when the density $f=e^{-U}$ of $X$ is strictly log-concave, in the sense that 
$\text{Hess}(U(X))$  is, almost surely, strictly positive definite.
As noted by \cite{BGG15}, one may give another alternative proof of Theorem~\ref{thm:varent} 
by applying a result of Harg\'e \cite[Theorem 2]{Har08:2}.
\end{rem}

In order to present our proof of Theorem~\ref{thm:varent}, we will need
some lemmata. The first one is a straightforward computation
that is a special case of a well known fact about exponential families
in statistics, but we write out a proof for completeness.

\begin{lem}\label{lem:formula}
Let $f$ be any probability density function on $\RL^n$ such that $f\in L^\alpha(\RL^n)$
for each $\alpha>0$, and define
\ben
F(\alpha)=\log \int_{\RL^n} f^{\alpha} .
\een
Let $X_{\alpha}$ be a random variable with density $f_{\alpha}$
on $\RL^n$, where
\ben
f_{\alpha}:= \frac{f^{\alpha}}{\int_{\RL^n} f^{\alpha}} .
\een
Then $F$ is infinitely differentiable on $(0,\infty)$, and moreover,
for any $\alpha>0$,
\ben
F''(\alpha)=\frac{1}{\alpha^2} V(X_\alpha) .
\een
\end{lem}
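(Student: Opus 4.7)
The plan is to differentiate $F$ twice under the integral sign, recognize the second derivative as the variance of $\log f(X_\alpha)$, and then convert that to $V(X_\alpha)$ via a simple affine rescaling.

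First I would justify differentiation under the integral. Fix $\alpha_0\in(0,\infty)$ and a compact neighborhood $[\alpha_1,\alpha_2]\subset(0,\infty)$ of $\alpha_0$. For any fixed small $\epsilon>0$ chosen so that $\alpha_1-\epsilon>0$ and $\alpha_2+\epsilon$ is still in a range where $f\in L^{\alpha_2+\epsilon}$, we have on $\{f\leq 1\}$ the bound $f^\alpha |\log f|^k\leq C_{k,\epsilon} f^{\alpha-\epsilon}\leq C_{k,\epsilon}f^{\alpha_1-\epsilon}$ (since $t^\epsilon|\log t|^k$ is bounded on $(0,1]$) and on $\{f>1\}$ the bound $f^\alpha(\log f)^k\leq C_{k,\epsilon}' f^{\alpha+\epsilon}\leq C_{k,\epsilon}'f^{\alpha_2+\epsilon}$. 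Thus $f^\alpha(\log f)^k$ is dominated by an integrable function uniformly in $\alpha\in[\alpha_1,\alpha_2]$ for every $k\geq 0$, so the integrals $\int f^\alpha$, $\int f^\alpha \log f$, $\int f^\alpha(\log f)^2$, etc., are all smooth in $\alpha$ and can be differentiated under the integral sign.

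Next I would carry out the computation. Writing $D(\alpha)=\int f^\alpha$ and $N(\alpha)=\int f^\alpha \log f$, we have $F=\log D$ and
\ben
F'(\alpha)=\frac{N(\alpha)}{D(\alpha)}=\int \log f \cdot f_\alpha = \E[\log f(X_\alpha)].
\een
Differentiating once more gives
\ben
F''(\alpha)=\frac{N'(\alpha)}{D(\alpha)}-\frac{N(\alpha)D'(\alpha)}{D(\alpha)^2}=\E[(\log f(X_\alpha))^2]-\bigl(\E[\log f(X_\alpha)]\bigr)^2=\Var(\log f(X_\alpha)).
\een

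Finally I would translate this into a statement about $V(X_\alpha)$. Since $f_\alpha=f^\alpha/D(\alpha)$, we have the pointwise identity $\log f_\alpha(x)=\alpha\log f(x)-F(\alpha)$, so
\ben
V(X_\alpha)=\Var(-\log f_\alpha(X_\alpha))=\Var(\alpha\log f(X_\alpha)-F(\alpha))=\alpha^2\Var(\log f(X_\alpha))=\alpha^2 F''(\alpha),
\een
which rearranges to the claimed identity. No step here is a serious obstacle; the only mildly delicate point is the domination argument needed to differentiate under the integral, which is handled cleanly by the assumption $f\in L^\alpha(\R^n)$ for all $\alpha>0$ together with the elementary estimates $t^\epsilon|\log t|^k\lesssim 1$ on $(0,1]$ and $(\log t)^k\lesssim t^\epsilon$ on $[1,\infty)$.
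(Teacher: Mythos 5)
Your proof is correct and follows essentially the same route as the paper: differentiate $F$ twice under the integral to get $F''(\alpha)=\Var(\log f(X_\alpha))$, then use $\log f_\alpha = \alpha\log f - F(\alpha)$ to rescale to $V(X_\alpha)/\alpha^2$. The only difference is that you spell out the domination estimates justifying interchange of differentiation and integration, which the paper simply asserts; that extra detail is welcome but does not change the argument.
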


\begin{proof}
Note that the assumption that $f\in L^\alpha(\RL^n)$ (or equivalently that
$F(\alpha)<\infty$) for all $\alpha>0$ guarantees that $F(\alpha)$ is infinitely differentiable for
$\alpha>0$ and that we can freely change the order of taking expectations and differentiation.

Now observe that
\ben\begin{split}
F'(\alpha)&= \frac{\int f^\alpha \log f}{\int f^\alpha}
= \int f_\alpha \log f  ;
\end{split}\een
if we wish, we may also massage this to write
\be\label{eq:1der}\begin{split}
F'(\alpha)
&= \frac{1}{\alpha} [ F(\alpha)- h(X_\alpha) ] .
\end{split}\ee
Differentiating again, we get
\ben\begin{split}
F''(\alpha)&= \frac{\int f^\alpha (\log f)^2}{\int f^\alpha}-\left(\frac{\int f^\alpha \log f}{\int f^\alpha}\right)^2\\
&= \int f_\alpha (\log f)^2 - \bigg( \int f_\alpha \log f\bigg)^2\\
&=\Var [  \log f(X_\alpha)]
= \Var \bigg[ \frac{1}{\alpha} \{ \log f_\alpha(X_\alpha) + F(\alpha)\} \bigg] \\
&= \frac{1}{\alpha^2} \Var [  \log f_\alpha(X_\alpha)]
= \frac{V(X_\alpha)}{\alpha^2},
\end{split}
\een
%
%
%if we wish, we may also further rewrite this as
%\be\label{eq:1der}\begin{split}
%F'(\alpha)
%&= \frac{1}{\alpha} \int f_\alpha \log f^\alpha
%= \frac{1}{\alpha} \bigg[ \int f_\alpha \log f_\alpha +\log \int f^\alpha \bigg] \\
%&= \frac{1}{\alpha} [ F(\alpha)- h(X_\alpha) ] .
%\end{split}\ee
%Since
%\ben\begin{split}
%\frac{\dou}{\dou \alpha} f_\alpha(x)
%&= \frac{\dou}{\dou \alpha} \bigg[ \frac{f^{\alpha}}{\int_{\RL^n} f^{\alpha}} \bigg]
%= \frac{(\int f^\alpha) (f^\alpha \log f)- f^\alpha (\int f^\alpha \log f)}{(\int f^\alpha)^2} \\
%&= f_\alpha \bigg[ \log f - \int f_\alpha \log f \bigg] ,
%\end{split}\een
%we have
%\ben\begin{split}
%F''(\alpha)
%&= \frac{d}{d \alpha}\int f_\alpha \log f
%= \int \bigg[\frac{\dou}{\dou \alpha} f_\alpha \bigg] (\log f) \\
%&= \int f_\alpha \bigg[ (\log f)^2 - (\log f) \int f_\alpha \log f \bigg] \\
%&= \int f_\alpha (\log f)^2 - \bigg( \int f_\alpha \log f\bigg)^2  \\
%&= \Var [  \log f(X_\alpha)]
%= \Var \bigg[ \frac{1}{\alpha} \{ \log f_\alpha(X_\alpha) + F(\alpha)\} \bigg] \\
%&= \frac{1}{\alpha^2} \Var [  \log f_\alpha(X_\alpha)]
%= \frac{V(X_\alpha)}{\alpha^2} ,
%\end{split}\een
as desired.
\end{proof}

The following lemma is a standard fact about the so-called perspective function
in convex analysis. The use of this terminology is due to Hiriart-Urruty and Lemar\'echal \cite[p. 160]{HL93a:book}
(see \cite{BV04:book} for additional discussion), although the notion has been used without a name in 
convex analysis for a long time (see, e.g., \cite[p. 35]{Roc70:book}).
Perspective functions have also seen recent use in convex geometry \cite{CFPP14, BFM15, FLM15})
%[Lemma 2.4] in CFPP
and empirical process theory \cite{VW11}. 
We give the short proof for completeness.

\begin{lem}\label{lem:cvx}
If $U:\RL^n\ra\RL\cup \{+\infty\}$ is a convex function, then
\ben
w(z,\alpha):=\alpha U(z/\alpha)
\een
is a  convex function on $\R^n\times(0,+\infty)$.
\end{lem}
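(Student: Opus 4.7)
The plan is to verify convexity directly from the definition. Pick two points $(z_1,\alpha_1), (z_2,\alpha_2) \in \R^n \times (0,\infty)$ and $t \in (0,1)$, and set $\alpha := t\alpha_1 + (1-t)\alpha_2$ and $z := tz_1 + (1-t)z_2$. The goal is to show $w(z,\alpha) \le t w(z_1,\alpha_1) + (1-t)w(z_2,\alpha_2)$.

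The key algebraic observation is that the point $z/\alpha$ can be written as a convex combination of $z_1/\alpha_1$ and $z_2/\alpha_2$ with weights adjusted by $\alpha_1,\alpha_2$; namely,
\ben
\frac{z}{\alpha} = \frac{t\alpha_1}{\alpha}\cdot\frac{z_1}{\alpha_1} + \frac{(1-t)\alpha_2}{\alpha}\cdot\frac{z_2}{\alpha_2},
\een
and the coefficients $t\alpha_1/\alpha$ and $(1-t)\alpha_2/\alpha$ are nonnegative and sum to $1$ (this is where positivity of $\alpha_1,\alpha_2$ enters crucially). Apply convexity of $U$ to this convex combination, then multiply the resulting inequality through by $\alpha > 0$. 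The $\alpha$ factor cancels against the denominators in the weights, yielding exactly $t\alpha_1 U(z_1/\alpha_1) + (1-t)\alpha_2 U(z_2/\alpha_2)$ on the right-hand side, which is $tw(z_1,\alpha_1) + (1-t)w(z_2,\alpha_2)$.

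There is no real obstacle here — the identity rewriting $z/\alpha$ as a weighted average with weights $t\alpha_i/\alpha$ is the whole trick, and extended-real-valued convexity of $U$ (allowing the value $+\infty$) causes no trouble since the inequality becomes vacuous whenever the right-hand side is $+\infty$. The argument also makes transparent why the domain must exclude $\alpha = 0$: one needs to divide by $\alpha$ to form the convex combination.
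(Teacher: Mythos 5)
Your proof is correct and is essentially the same as the paper's: both rewrite $z/\alpha$ as the convex combination $\frac{t\alpha_1}{\alpha}\cdot\frac{z_1}{\alpha_1} + \frac{(1-t)\alpha_2}{\alpha}\cdot\frac{z_2}{\alpha_2}$, apply convexity of $U$, and multiply through by $\alpha>0$. The paper merely prefaces this with an explicit statement of the homogeneity $w(az,a\alpha)=aw(z,\alpha)$, which you use implicitly.
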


\begin{proof}
First note that by definition, $w(az, a\alpha)=aw(z,\alpha)$ for any $a>0$ and any $(z,\alpha)\in \R^n\times(0,+\infty)$,
which implies in particular that
\ben
\frac{1}{\alpha} w(z,\alpha)= w\bigg(\frac{z}{\alpha}, 1\bigg) .
\een
Hence
\ben\begin{split}
& w(\lam z_1 +(1-\lam)z_2, \lam \alpha_1 +(1-\lam)\alpha_2) \\
&= [\lam \alpha_1 +(1-\lam)\alpha_2] \,
U\bigg( \frac{\lam \alpha_1 \frac{z_1}{\alpha_1} +(1-\lam)\alpha_2 \frac{z_2}{\alpha_2}}{\lam \alpha_1 +(1-\lam)\alpha_2} \bigg)\\
&\leq \lam \alpha_1 U\bigg(\frac{z_1}{\alpha_1}\bigg) + (1-\lam)\alpha_2 U\bigg(\frac{z_2}{\alpha_2}\bigg) \\
&= \lam w(z_1, \alpha_1) + (1-\lam) w(z_2, \alpha_2) ,
\end{split}\een
for any $\lam\in [0,1]$, $z_1, z_2 \in\RL^n$, and $\alpha_1, \alpha_2 \in (0,\infty)$.
\end{proof}

%\begin{proof}[Second proof of Lemma~\ref{lem:cvx}]
%The function $w(z,p):=pV(z/p)$ is convex on $\R^n\times(0,+\infty)$, %which is easy to see
%since it is homogeneous of degree one and its restriction to $\{p=1\}$ is obviously convex.
%Use Legendre dual to see convexity.
%\end{proof}

The key observation is the following theorem.

\begin{thm}\label{thm:lc}
If $f$ is log-concave on $\R^n$, then the function
\ben
G(\alpha):= \alpha^n\int f(x)^{\alpha} dx
\een
is log-concave on $(0,+\infty)$.
\end{thm}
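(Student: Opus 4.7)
The plan is to write $f = e^{-U}$ for a convex $U:\R^n \to \R \cup \{+\infty\}$ and perform a change of variables that puts $G$ into a form where Prékopa's theorem on marginals of log-concave functions applies directly. Substituting $y = \alpha x$ in the integral (so $dx = \alpha^{-n}\,dy$), the factor $\alpha^n$ cancels and one obtains
\[
G(\alpha) \;=\; \int_{\R^n} e^{-\alpha U(y/\alpha)}\,dy \;=\; \int_{\R^n} e^{-w(y,\alpha)}\,dy,
\]
where $w(y,\alpha) := \alpha U(y/\alpha)$ is precisely the perspective function appearing in Lemma~\ref{lem:cvx}.

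By Lemma~\ref{lem:cvx}, $w$ is convex on $\R^n \times (0,\infty)$, so the integrand $(y,\alpha) \mapsto e^{-w(y,\alpha)}$ is a log-concave function on $\R^n \times (0,\infty)$. The Prékopa--Leindler inequality (in the form of Prékopa's theorem on marginals) then guarantees that the marginal
\[
\alpha \;\longmapsto\; \int_{\R^n} e^{-w(y,\alpha)}\,dy \;=\; G(\alpha)
\]
is log-concave on $(0,\infty)$, which is the desired conclusion.

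I expect the only genuine subtlety to be the bookkeeping of the substitution (the $\alpha^n$ in the definition of $G$ is designed exactly so that the Jacobian absorbs it, which is the whole reason this identity works). The convexity of $w$ itself is already handled by Lemma~\ref{lem:cvx}, and Prékopa's theorem is a standard black box. No integrability issue is introduced by the proof, since log-concavity of a marginal is a statement that can be verified pointwise via Prékopa--Leindler and is vacuous (or trivially true with value $0$) at points where the integral fails to be finite.
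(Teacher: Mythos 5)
Your proof is correct and follows exactly the same route as the paper: write $f=e^{-U}$, substitute to cancel the $\alpha^n$ factor, invoke Lemma~\ref{lem:cvx} for the convexity of the perspective function $w(y,\alpha)=\alpha U(y/\alpha)$, and apply Pr\'ekopa's theorem on marginals of log-concave functions. There is no meaningful difference between your argument and the one in the paper.
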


\begin{proof}
Write $f=e^{-U}$, with $U$ convex. Make the change of variable $x=z/\alpha$ to get
\ben
G(\alpha)=\int e^{-\alpha U(z/\alpha)}dz .
\een
The function $w(z,\alpha):=\alpha U(z/\alpha)$ is convex on $\R^n\times(0,+\infty)$ by Lemma~\ref{lem:cvx},
which means that the integrand above is log-concave.
The log-concavity of $G$  then follows from Pr\'ekopa's theorem \cite{Pre73},
which implies that marginals of log-concave functions are log-concave.
\end{proof}

\begin{rem}
An old theorem of Borell \cite[Theorem 2]{Bor73a} states that if $f$ is concave on $\R^n$, then
%MODIFICATION  MATTHIEU: $G_f(p):={p+n\choose n}\int f^p$ 
$G_f(p):=(p+1)\cdots(p+n)\int f^p$is log-concave as a function of $p\in (0,\infty)$. Using this and the fact that a log-concave
function is a limit of $\alpha$-concave functions with $\alpha\to 0$, one can obtain
an alternate, indirect proof of Theorem~\ref{thm:lc}. One can also similarly obtain 
an indirect proof of Theorem~\ref{thm:lc} by considering a limiting version of \cite[Theorem VII.2]{BM11:it},
which expresses a log-concavity property of $(p-1)\ldots (p-n)\int \phi^{-p}$ for any convex function 
$\phi$ on $\R^n$, for $p>n+1$ (an improvement of this to the optimal range $p>n$ is described
in \cite{FLM15, BFM15}, although this is not required for this alternate proof of Theorem~\ref{thm:lc}).
\end{rem}

\begin{proof}[Proof of Theorem~\ref{thm:varent}]
Since $f$ is a log-concave density, it necessarily holds that $f\in L^\alpha(\RL^n)$ for every $\alpha>0$;
in particular, $G(\alpha):= \alpha^n\int f^{\alpha}$ is finite and infinitely differentiable on the domain $(0,\infty)$.
%The finiteness follows, for example, from Theorem~\ref{thm:lc}, and the smoothness from Lemma~\ref{lem:formula}.
By definition,
\ben
\log G(\alpha)= n\log \alpha + \log \int f^{\alpha}
= n\log \alpha + F(\alpha) .
\een
Consequently,
\ben
\frac{d^2}{d \alpha^2} [\log G(\alpha)]
= -\frac{n}{\alpha^2} + F''(\alpha) .
\een
By Theorem~\ref{thm:lc},  $\log G(\alpha)$ is concave, and hence
we must have that
\ben
-\frac{n}{\alpha^2} + F''(\alpha) \leq 0
\een
for each $\alpha>0$. However, Lemma~\ref{lem:formula} implies that
$F''(\alpha)=V(X_\alpha)/\alpha^2$, so that we obtain the inequality
\ben
\frac{V(X_\alpha)-n}{\alpha^2} \leq 0 .
\een
For $\alpha=1$, this implies that $V(X)\le n$.
%MODIFICATION MATTHIEU: Consequently, $V(X_\alpha) \leq n$ for each $\alpha>0$.
%Since $f^\alpha$ (and hence $f_\alpha$) is log-concave for each $\alpha>0$
%when $f$ is log-concave, the statement that $V(X_\alpha) \leq n$ for each $\alpha>0$
%when $X$ has a log-concave density is equivalent to the statement that
%$V(X) \leq n$ when $X$ has a log-concave density.
\end{proof}

Notice that if $f=e^{-U}$, where $U:\R^n\to[0,\infty]$ is positively homogeneous of degree 1, then 
the same change of variable as in the proof of Theorem~\ref{thm:lc} shows that 
$$G(\alpha)=\int e^{-\alpha U(z/\alpha)}dz=\int e^{-U(z)}dz=\int f(z)dz=1.$$
Hence the function $G$ is constant. Then the proof above shows that 
$V(X)=n$, which establishes the equality case stated in Remark~\ref{rem:extremal}.

%Observe that
%\ben\begin{split}
%V(X_{\alpha})-n \,&= \, \alpha^2 F''(\alpha)-n \\
%\,&=\, \alpha^2 [\log G(\alpha)]''
%\end{split}\een
%By writing that $(Log F)''(1)\le 0$, one also recovers your theorem stating that the variance of $V$ is bounded from above by $n$.

\section{A general bootstrapping strategy}
\label{sec:gen}

The purpose of this section is to describe a strategy for obtaining exponential deviation inequalities
when one has uniform control on variances of a family of random variables. Log-concavity is not
an assumption made anywhere in this section.

\begin{thm}\label{thm:mainconcentrate}
Suppose $X\sim f$, where $f\in L^\alpha(\RL^n)$ for each $\alpha>0$.
Let $X_\alpha\sim f_\alpha$, where
\ben
f_\alpha(x) = \frac{f^\alpha(x)}{\int f^\alpha} .
\een
If $K=K(f):=\sup_{\alpha>0} V(X_\alpha)$, then
\ben
\E\big[e^{\beta\{\h(X)-h(X)\}}\big] \leq e^{Kr(-\beta)} , \quad \beta\in\RL ,
\een
where
\ben
r(u)= \left\{ \begin{array}{ll}
u-\log(1+u) & \text{ for } u>-1 \\
+\infty & \text{ for }  u\leq -1 \quad . \\
%QUESTION MATTHIEU: I DON'T SEE WHY THERE IS A POINT FAR ON THE RIGHT IN THE LINE ABOVE
\end{array} \right.
\een
\end{thm}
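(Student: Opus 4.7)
The plan is to express the moment generating function of $\tilde h(X) - h(X)$ directly in terms of the function $F(\alpha) = \log\int f^\alpha$ from Lemma~\ref{lem:formula}, and then exploit the hypothesis $\sup_\alpha V(X_\alpha) \le K$ through a second-order Taylor expansion. The key observation is the identity
\[
\E\!\left[e^{\beta(\tilde h(X) - h(X))}\right] \;=\; e^{-\beta h(X)} \int_{\R^n} f(x)^{1-\beta}\,dx \;=\; \exp\{g(\beta)\},
\]
where $g(\beta) := -\beta h(X) + F(1-\beta)$. Thus it suffices to show $g(\beta) \le K\,r(-\beta)$ for all $\beta \in \R$.

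Next I would verify that $g(0) = g'(0) = 0$. Indeed, $g(0) = F(1) = \log\!\int f = 0$. Differentiating in $\beta$ gives $g'(\beta) = -h(X) - F'(1-\beta)$; and using the identity $F'(1) = \int f \log f = -h(X)$ (a special case of \eqref{eq:1der} with $\alpha = 1$), one obtains $g'(0) = 0$. Differentiating once more and invoking Lemma~\ref{lem:formula} yields
\[
g''(\beta) \;=\; F''(1-\beta) \;=\; \frac{V(X_{1-\beta})}{(1-\beta)^2} \;\le\; \frac{K}{(1-\beta)^2}
\]
for every $\beta < 1$ (for $\beta \ge 1$ the bound $g(\beta) \le Kr(-\beta) = +\infty$ is vacuous).

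Because $g''\ge 0$, Taylor's theorem with integral remainder gives, for any $\beta < 1$,
\[
g(\beta) \;=\; \int_0^{\beta}(\beta - u)\, g''(u)\,du \;\le\; K \int_0^{\beta} \frac{\beta - u}{(1-u)^2}\,du,
\]
where the inequality holds for both signs of $\beta$ after noting that for $\beta < 0$ the expression $\int_0^\beta(\beta-u)g''(u)\,du = \int_\beta^0 (u-\beta)g''(u)\,du$ has a nonnegative integrand. A direct computation via the substitution $v = 1-u$ then shows that
\[
\int_0^{\beta} \frac{\beta - u}{(1-u)^2}\,du \;=\; -\beta - \log(1-\beta) \;=\; r(-\beta),
\]
valid for every $\beta < 1$. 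Combining these steps yields $g(\beta) \le K\,r(-\beta)$, which is the desired bound.

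The argument has essentially no obstacle of substance: the only care needed is the bookkeeping of the Taylor integral for $\beta$ of either sign, and the explicit evaluation of the one-dimensional integral. The conceptual content is fully contained in Lemma~\ref{lem:formula}, which converts the variance hypothesis into a pointwise bound on $F''$, and in the vanishing of $g$ and $g'$ at $0$, which reflects the fact that $\tilde h(X) - h(X)$ is centered with an MGF that is controlled second-order by the varentropy.
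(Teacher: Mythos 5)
Your proof is correct and follows essentially the same route as the paper: both apply Taylor's theorem with integral remainder to $F$ (you phrase it for the shifted function $g(\beta)=F(1-\beta)-\beta h(X)$ centered at $0$, the paper expands $F$ around $\alpha=1$ and substitutes $\beta=1-\alpha$ afterward, but this is only a change of variables), bound the second derivative via $F''(u)\le K/u^2$ from Lemma~\ref{lem:formula}, handle the sign of the Taylor integrand for both signs of $\beta$, and evaluate the resulting one-dimensional integral to produce $r(-\beta)$.
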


\begin{proof}
Suppose $X$ is a random vector drawn from a density $f$ on $\RL^n$,
and define, for each $\alpha>0$,
$F(\alpha)=\log \int f^\alpha$.
Set
\ben
K=\sup_{\alpha>0} V(X_\alpha)= \sup_{\alpha>0} \alpha^2 F''(\alpha) ;
\een
the second equality follows from Lemma~\ref{lem:formula}.
Since $f\in L^\alpha(\RL^n)$ for each $\alpha>0$,  $F({\alpha})$ is finite
and moreover, infinitely differentiable for
$\alpha>0$, and we can freely change the order of integration and differentiation
when differentiating $F({\alpha})$.

%MODIFICATION MATTHIEU: I HAVE A PROPOSAL WHICH UNIFIES A BIT THE TWO CASES BELOW

%Let $\alpha>1$. We have for $u_1>1$,
%\ben\begin{split}
%F'(u_1) &=F'(1)+\int_1^{u_1} F^{''}(u) u^2\frac{1}{u^2}du \\
%&\leq F'(1)+K \bigg(1-\frac{1}{u_1}\bigg) .
%\end{split}\een
%Integrating over $u_1$ from $1$ to $\alpha$, and since $F(1)=0$,  we get
%\begin{equation}\label{eq:taylor1}
%F(\alpha) \leq (\alpha-1) F'(1)+K(\alpha-1-\log\alpha) .
%\end{equation}
%%This concludes the case when $\alpha>1$.
%
%Now consider $0<\alpha<1$. For $u_1\in (0,1)$,
%\ben\begin{split}
%F'({u_1})
%&=F'(1)-\int_{u_1}^1 F^{''}(u) u^2\frac{1}{u^2}du\\
%%&=F'(0) - \int_{u_1}^{0} F^{''}(u) (1+u)^2\frac{1}{(1+u)^2}du\\
%&\geq F'(1) + K\bigg(1-\frac{1}{u_1}\bigg).
%\end{split}\een
%By integrating over $u_1$ from $\alpha$ to $1$, we get
%\bee
%&-F(\alpha) \geq (1-\alpha) F'(1) +K (1-\alpha+ \log \alpha).
%\ene
%Multiplying by $-1$, we obtain \eqref{eq:taylor1} for $0<\alpha<1$.
From Taylor-Lagrange formula, for every $\alpha>0$, one has 
$$F(\alpha)=F(1) +(\alpha -1)F'(1)+\int_1^\alpha (\alpha-u)F''(u)du.$$
Using that $F(1)=0$, $F''(u)\le K/u^2$ for every $u>0$ and the fact that for $0<\alpha<u<1$, one has $\alpha-u<0$, we get
%QUESTION MATTHIEU: MAYBE SPLIT THE NEXT LINE INTO TWO?
\ben\begin{split}
F(\alpha)
&\le (\alpha -1)F'(1)+K\int_1^\alpha \frac{\alpha-u}{u^2}du \\
&=(\alpha -1)F'(1)+K\left[-\frac{\alpha}{u}-\log(u)\right]_1^\alpha.
\end{split}\een
Thus, for $\alpha>0$, we have proved that
%QUESTION MATTHIEU: HERE I DON'T SEE WHY EVERYTHING IS WITH EXPONENTIALS. I PROPOSE TO TAKE IT OUT
%\bee
%e^{F(\alpha)} \leq e^{(\alpha-1) F'(1)}e^{K(\alpha-1-\log\alpha)}.
%\ene
\bee
F(\alpha) \leq (\alpha-1) F'(1)+K(\alpha-1-\log\alpha).
\ene

Setting $\beta=1-\alpha$, we have for $\beta<1$ that
\be\label{eq:beta}
e^{F(1-\beta)} \leq e^{-\beta F'(1)}e^{K(-\beta-\log(1-\beta))}.
\ee
Observe that
$e^{F(1-\beta)}= \int f^{1-\beta} = \E [f^{-\beta}(X)] = \E [e^{-\beta \log f(X)}] =\E \big[e^{\beta\h(X)}\big]$
and
$e^{-\beta F'(1)}=e^{\beta h(X)}$; the latter fact follows from the fact that
$F'(1)=-h(X)$ as is clear from the identity \eqref{eq:1der}. Hence the inequality \eqref{eq:beta}
may be rewritten as
\be\label{eq:beta2}
\E\big[e^{\beta\{\h(X)-h(X)\}}\big] \leq e^{Kr(-\beta)} , \quad \beta\in\RL .
\ee
\end{proof}

\begin{rem}
We note that the function $r(t)=t-\log (1+t)$ for $t> -1$,
(or the related function $h(t)=t\log t-t+1$ for $t> 0$,
which satisfies $sh(t/s)=tr_1(s/t)$ for $r_1(u)=r(u-1)$)
appears in many exponential concentration inequalities in the literature,
including Bennett's inequality \cite{Ben62} (see also \cite{BLM13:book}),
and empirical process theory \cite{Wel78}. It would be nice to have a clearer understanding
of why these functions appear in so many related contexts even though the specific
circumstances vary quite a bit.
%ALSO include discussion of the behavior near 0 (Gaussian) versus away from 0 (exponential);
%see section on subgamma stff in BLM.
\end{rem}

%NEXT REMARK PROPOSED BY MATTHIEU. YOU MAY (SHOULD ?) CHANGE THE FORMULATION
\begin{rem}
Note that the function $r$ is convex on $\R$ and  has a quadratic behavior in the neighborhood of $0$ ($r(u)\sim_0\frac{u^2}{2}$) and a linear behavior at $+\infty$ ($r(u)\sim_\infty u$). 
\end{rem}

\begin{cor}\label{cor:cor-mainconcentrate}
With the assumptions and notation of Theorem~\ref{thm:mainconcentrate},
we have for any $t>0$ that
\ben\begin{split}
\mathbf{P}\{ \h(X)-h(X) \geq t\} \,\, &\leq \exp\bigg\{-K r\bigg(\frac{t}{K}\bigg)\bigg\} \\
\mathbf{P}\{ \h(X)-h(X) \leq -t\} & \leq  \exp\bigg\{-K r\bigg(-\frac{t}{K}\bigg)\bigg\} %e^{-Kr(\frac{t}{K})}
\end{split}\een
\end{cor}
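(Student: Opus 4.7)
The plan is to apply the classical Chernoff--Markov method to the moment generating function bound in Theorem~\ref{thm:mainconcentrate} and then optimize over the free parameter. No new ingredient beyond that estimate is needed; the entire argument is the standard computation of the Legendre transform of the rate function $r$, which happens to be self-dual in an appropriate sense.

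For the upper tail, I would fix $t>0$ and use $\beta\in(0,1)$ (the range where $r(-\beta)$ is finite). Markov's inequality combined with the MGF bound gives
\ben
\P\{\h(X)-h(X)\geq t\}\leq e^{-\beta t}\,\E\!\big[e^{\beta\{\h(X)-h(X)\}}\big]\leq \exp\bigl(-\beta t + Kr(-\beta)\bigr).
\een
Minimizing the exponent via the first-order condition $-t+K\beta/(1-\beta)=0$ yields $\beta^{*}=t/(K+t)$; a direct substitution and simplification (using $-\beta t - Kt/(K+t) = -t$ and $-K\log(1-\beta^*)=K\log(1+t/K)$) collapses the exponent to exactly $-K r(t/K)$.

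For the lower tail I would use the MGF bound with negative parameter: writing $\beta=-\gamma$ for $\gamma>0$, Theorem~\ref{thm:mainconcentrate} gives $\E[e^{-\gamma\{\h(X)-h(X)\}}]\leq e^{Kr(\gamma)}$, and Markov's inequality produces
\ben
\P\{\h(X)-h(X)\leq -t\}\leq \inf_{\gamma>0}\exp\bigl(-\gamma t+Kr(\gamma)\bigr).
\een
The analogous first-order condition $-t + K\gamma/(1+\gamma)=0$ gives $\gamma^{*}=t/(K-t)$ in the regime $t<K$, and substituting reproduces $-Kr(-t/K)$. In the regime $t\geq K$, the objective is monotonically decreasing in $\gamma$ (since $Kr(\gamma)\sim K\gamma$ as $\gamma\to\infty$), so the infimum is $0$, matching the convention $r(u)=+\infty$ for $u\leq -1$ that is built into the statement of the corollary.

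The only computational step worth being careful about is the Legendre-transform identity showing that both optimizations collapse to $-Kr(\pm t/K)$; this is classical (it is the same calculation behind Bennett's inequality, noted already in the remark following Theorem~\ref{thm:mainconcentrate}), and once it is in hand the corollary follows immediately. There is no real obstacle here: the substantive content of the result is entirely contained in the MGF estimate of Theorem~\ref{thm:mainconcentrate}, and the corollary is its routine Chernoff-type consequence.
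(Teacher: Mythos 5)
Your proposal is correct and follows essentially the same route as the paper's proof: both apply the Cram\'er--Chernoff method (Markov's inequality on the MGF bound of Theorem~\ref{thm:mainconcentrate}, followed by optimization over the exponent parameter). The only cosmetic difference is that you solve the first-order conditions explicitly and substitute, whereas the paper phrases the same computation as the Legendre transform identity $r^*(t)=r(-t)$ and then invokes biduality $(\tilde r)^* = r$ for the second tail; these are the same calculation in different clothing, and your treatment of the degenerate regime $t\geq K$ correctly matches the convention $r(u)=+\infty$ for $u\leq -1$.
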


%ADDITION BY MATTHIEU
The proof is classical and often called the Cram\'er-Chernoff method (see for example section 2.2 in \cite{BLM13:book}). It uses the Legendre transform $\varphi^*$ of a convex function $\varphi :\R\to\R\cup\{+\infty\}$ defined for $y\in\R$ by 
$$\varphi^*(y)=\sup_xxy-\varphi(x).$$
Notice that if $\min\varphi=\varphi(0)$ then for every $y>0$, the supremum is reached at a positive $x$, that is 
$\varphi^*(y)=\sup_{x>0}xy-\varphi(x).$ Similarly, for $y<0$, the supremum is reached at a negative $x$.

\begin{proof}
The idea is simply to use Markov's inequality in conjunction with Theorem~\ref{thm:mainconcentrate},
and optimize the resulting bound. 

%%%NEXT LINE MODIFIED BY MATTHIEU
For the lower tail, we have for $\beta>0$ and $t>0$,
\bee\label{eq:lowertail}\begin{split}
\mathbb{P}[\h(X)-h(X)\leq -t]
&\leq \mathbf{E}\bigg[e^{-\beta\big(\h(X)-h(X)\big)}\bigg]e^{-\beta t} \\
&\leq \exp\bigg\{K\bigg(r(\beta)-\frac{\beta t}{K}\bigg)\bigg\} .
\end{split}\ene
%%%NEXT LINE MODIFIED BY MATTHIEU
Thus minimizing on $\beta>0$, and using the remark before the proof, we get 
\be\label{eq:legendre}
\mathbb{P}[\h(X)-h(X)\leq -t]\le \exp\bigg\{-K\sup_{\beta>0}\bigg(\frac{\beta t}{K}-r(\beta)\bigg)\bigg\}
=e^{-Kr^*\left(\frac{t}{K}\right)} .
\ee
Let us compute the Legendre transform $r^*$ of $r$. For every $t$, one has 
$$r^*(t)=\sup_u tu-r(u)=\sup_{u>-1}\left(tu-u+\log(1+u)\right).$$
One deduces that $r^*(t)=+\infty$  for $t\ge1$. For $t<1$, by differentiating, the supremum is reached at $u=t/(1-t)$ and replacing in the definition we get 
$$r^*(t)=-t-\log(1-t)=r(-t).$$ 
Thus $r^*(t)=r(-t)$ for all $t\in\R$. Replacing, in the inequality (\ref{eq:legendre}), we get the result for the lower tail.

For the upper tail, we use the same argument:  for $\beta>0$ and $t>0$,
\bee\label{eq:uppertail}\begin{split}
\mathbb{P}[\h(X)-h(X)\geq t]
&\leq \mathbf{E}\bigg[e^{\beta\big(\h(X)-h(X)\big)}\bigg]e^{-\beta t} \\
&\leq \exp\bigg\{K\bigg(r(-\beta)-\frac{\beta t}{K}\bigg)\bigg\} .
\end{split}\ene
Thus minimizing on $\beta>0$, we get 
\be\label{eq:legendre2}
\mathbb{P}[\h(X)-h(X)\geq t]\le \exp\bigg\{-K\sup_{\beta>0}\bigg(\frac{\beta t}{K}-r(-\beta)\bigg)\bigg\}.
\ee
Using the remark before the proof, in the right hand side term appears the Legendre transform of the function $\tilde{r}$ defined by $\tilde{r}(u)=r(-u)$. Using that $r^*(t)=r(-t)=\tilde{r}(t)$, we deduce that $(\tilde{r})^*=(r^*)^*=r$. Thus the inequality (\ref{eq:legendre2}) gives the result for the upper tail.

%For the lower tail, we have for $\beta>0$ and $t>0$,
%\bee\label{eq:lowertail}\begin{split}
%\mathbb{P}[\h(X)-h(X)\leq -t]
%&\leq \mathbf{E}\bigg[e^{-\beta\big(\h(X)-h(X)\big)}\bigg]e^{-\beta t} \\
%&\leq \exp\bigg\{K\bigg(r(\beta)-\frac{\beta t}{K}\bigg)\bigg\} .
%\end{split}\ene
%What we need to show is that
%\ben
%\inf_{\beta>0} r(\beta)-\beta u = -r(-u)
%\een
%and that
%\ben
%\inf_{\beta>0} r(-\beta)-\beta u = -r(u) .
%\een
%(Both these relations may be seen as an expression of Legendre-Fenchel duality between
%two convex functions constructed appropriately, but we do not emphasize this here.)
%To show the first relation, let 
%\ben
%I(\beta)=r(\beta)-\beta u=-\beta(1+u)-\log(1-\beta).
%\een
%Simple calculus will show $I'(\frac{u}{1+u})=0$ and $I(\beta)$ achieves the minimum at $\beta=\frac{u}{1+u}$ for $\beta>0$. Some algebra shows
%\ben
%I\bigg(\frac{u}{1+u}\bigg)=\log(1+u)-u=-r(-u).
%\een
%This proves the first relation. The proof of the second one is entirely similar. 
%%%%
%using \eqref{eq:beta2} for the second inequality.
%Simple calculus shows that, for $u>0$,
%\bee
%-\inf_{\alpha>0}\bigg(\alpha-\log(1+\alpha)-\alpha u\bigg)=r(u) .
%\ene
%Thus we have proved the first inequality; the proof of the second is almost the same.
\end{proof}

\section{Conclusion}
\label{sec:opt-dev}
%Optimal concentration in the log-concave setting

The purpose of this section is to combine the results of Sections~\ref{sec:var}
and \ref{sec:gen} to deduce sharp bounds for the moment generating function 
of the information content of random vectors with log-concave densities.
Naturally these yield good bounds on the deviation probability of the information content $\h(X)$
from its mean $h(X)=\E \h(X)$. 
We also take the opportunity to record some other easy consequences.

\begin{thm}\label{thm:lc-dev}
Let $X$ be a random vector in $\mathbb{R}^n$ with a log-concave density $f$. For $\beta<1$,
\bee
\E\bigg[e^{\beta[\h(X)-h(X)]}\bigg]\leq \E\bigg[e^{\beta[\h(X^*)-h(X^*)]}\bigg],
\ene
where $X^*$ has density $f^*=e^{-\sum_{i=1}^nx_i}$, restricted to the positive quadrant.
\end{thm}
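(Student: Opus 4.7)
The plan is to combine the variance bound of Theorem~\ref{thm:varent} with the bootstrapping result of Theorem~\ref{thm:mainconcentrate}, and then verify that the extremal density $f^\ast=e^{-\sum x_i}\mathbf{1}_{\R_+^n}$ achieves equality in the resulting moment generating function bound.

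First I would observe that the tilted densities $f_\alpha\propto f^\alpha$ associated with a log-concave $f$ are themselves log-concave: since $f=e^{-U}$ with $U$ convex, $f^\alpha=e^{-\alpha U}$ is also log-concave for every $\alpha>0$, and normalization preserves this. Consequently Theorem~\ref{thm:varent} applies to each $X_\alpha\sim f_\alpha$ and yields $V(X_\alpha)\le n$, so in the notation of Theorem~\ref{thm:mainconcentrate} we have
$$K(f)=\sup_{\alpha>0}V(X_\alpha)\le n.$$
Since $r(-\beta)=-\beta-\log(1-\beta)\ge0$ for every $\beta<1$, Theorem~\ref{thm:mainconcentrate} therefore gives
$$\E\bigl[e^{\beta[\h(X)-h(X)]}\bigr]\;\le\;e^{Kr(-\beta)}\;\le\;e^{nr(-\beta)}.$$

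Next I would show that the right-hand side is precisely the moment generating function of $\h(X^\ast)-h(X^\ast)$ for the product-exponential density $f^\ast$. For this density, $\h(X^\ast)=-\log f^\ast(X^\ast)=\sum_{i=1}^nX_i^\ast$ where the $X_i^\ast$ are i.i.d.\ standard exponentials, so $h(X^\ast)=n$ and, for $\beta<1$,
$$\E\bigl[e^{\beta[\h(X^\ast)-h(X^\ast)]}\bigr]=e^{-\beta n}\prod_{i=1}^n\E\bigl[e^{\beta X_i^\ast}\bigr]=e^{-\beta n}(1-\beta)^{-n}=\exp\bigl(n[-\beta-\log(1-\beta)]\bigr)=e^{nr(-\beta)}.$$
Combining the two displays gives the inequality claimed in the theorem.

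There is no real obstacle here: the substantive work has already been done in Sections~\ref{sec:var} and~\ref{sec:gen}. The only small point to check carefully is that the tilted family $f_\alpha$ preserves log-concavity (so that the varentropy bound applies uniformly in $\alpha$), and that $K\le n$ can be inserted into the bound from Theorem~\ref{thm:mainconcentrate} because $r(-\beta)\ge 0$. The identification of $e^{nr(-\beta)}$ with the exact moment generating function of the extremal $X^\ast$ then closes the argument, and simultaneously reconfirms the sharpness of the constant $n$ asserted in Remark~\ref{rem:extremal}.
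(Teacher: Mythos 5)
Your proof is correct and follows essentially the same route as the paper: take $K\le n$ in Theorem~\ref{thm:mainconcentrate} via Theorem~\ref{thm:varent}, then identify $e^{nr(-\beta)}$ as the exact moment generating function for $X^\ast$. You spell out two small points the paper leaves implicit (that each tilted density $f_\alpha$ is again log-concave so that $V(X_\alpha)\le n$ holds uniformly, and that $K\le n$ can be substituted into the bound because $r(-\beta)\ge 0$), and you carry out the ``easy computations'' for $X^\ast$ explicitly; these are welcome additions but do not change the argument.
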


\begin{proof}
Taking $K=n$ in Theorem~\ref{thm:mainconcentrate} (which we can do in the log-concave setting
because of Theorem~\ref{thm:varent}), we obtain:
\ben
\E\big[e^{\beta\{\h(X)-h(X)\}}\big] \leq e^{nr(-\beta)} , \quad \beta\in\RL .
\een
%MODIFICATION OF MATTHIEU: ERASING OF THE DEFINITION OF r
%where
%\ben
%r(u)= \left\{ \begin{array}{ll}
%-u-\log(1-u) & \text{ for } -\infty<u<1 \\
%+\infty & \text{ for }  u\geq1 \quad\quad\quad\quad . \\
%\end{array} \right.
%\een
Some easy computations will show:
\ben
\E\big[e^{\beta\{\h(X^*)-h(X^*)\}}\big]=e^{nr(-\beta)} , \quad \beta\in\RL .
\een
This concludes the proof. 

%First of all, by Remark~\ref{rem:exp}, the \textbf{Assumption} in Section~\ref{sec:gen} is satisfied. Recall from Lemma~\ref{lem:diff} that
%\bee
%(1+\alpha)^2F^{''}_\alpha=\text{Var}[\log(f_\alpha(\mathbf{X_\alpha}))],
%\ene
%where $f_\alpha=\frac{f^{1+\alpha}}{\int f^{1+\alpha}dx}$ is the density of $\mathbf{X_\alpha}$. Once we observe that $f_\alpha$ is log-concave since $f$ is, we can apply Theorem~\ref{thm:mainbound} to obtain:
%\bee
%\text{Var}[\log(f_\alpha(\mathbf{X_\alpha}))]\leq n.
%\ene
%Hence
%\bee
%\frac{1}{n}\sup_{\alpha>-1}(1+\alpha)^2F^{''}_\alpha\leq 1.
%\ene
%Thus can take $c=1$ in Lemma~\ref{lem:taylor-2}. But
%\bee
%\mathbf{E}\bigg[e^{\alpha(\log(f^*(\mathbf{X^*}))-\mathbf{E}[\log(f^*(\mathbf{X^*}))])}\bigg]=e^{n(\alpha-\log(1+\alpha))}.
%\ene
%Hence Lemma~\ref{lem:taylor-2} will give us the statement in the theorem.
\end{proof}

As for the case of equality of Theorem~\ref{thm:varent}, discussed in Remark~\ref{rem:extremal}, notice that 
there is a broader class of densities for which one has equality in Theorem~\ref{thm:lc-dev},
including all those of the form $e^{-\|x\|_K}$, where $K$ is a symmetric convex body.

\begin{rem}
The assumption $\beta<1$ in Theorem~\ref{thm:lc-dev} is strictly not required;
however, for $\beta \geq 1$, the right side is equal to $+\infty$. Indeed, already for $\beta=1$,
one sees that for any random vector $X$ with density $f$,
\ben\begin{split}
\E\big[e^{\h(X)-h(X)}\big] &= e^{-h(X)} \E\bigg[ \frac{1}{f(X)}\bigg] = e^{-h(X)} \int_{\text{supp(f)}} dx \\
&=  e^{-h(X)} \text{Vol}_n (\text{supp(f)}) ,
\end{split}\een
where $\text{supp(f)}=\overline{\{x\in\RL^n: f(x)>0\}}$ is the support of the density $f$ and $\text{Vol}_n$ denotes Lebesgue measure on $\RL^n$.
In particular, this quantity for $X^*$, whose support has infinite Lebesgue measure, is $+\infty$.
\end{rem}

\begin{rem}
Since
\bee
\lim_{\alpha\rightarrow 0}\frac{2}{\alpha^2}\mathbf{E}\bigg[e^{\alpha(\log f(X)-\mathbf{E}[\log f(X)])}\bigg] =V(X),
\ene
we can recover Theorem~\ref{thm:varent} from Theorem~\ref{thm:lc-dev}.
%Theorem~\ref{thm:mainbound} from Theorem~\ref{thm:ordering}.
\end{rem}

Taking $K=n$ in Corollary~\ref{cor:cor-mainconcentrate} (again
because of Theorem~\ref{thm:varent}), we obtain:

\begin{cor}\label{cor:specialconcentrate}
Let $X$ be a random vector in $\mathbb{R}^n$ with a log-concave density $f$. For $t>0$,
\bee
&\mathbb{P}[\h(X)-h(X)\leq -nt]\leq e^{-nr(-t)},\\
&\mathbb{P}[\h(X)-h(X)\geq nt]\leq e^{-nr(t)},
\ene
where $r(u)$ is defined in Theorem~\ref{thm:mainconcentrate}.
\end{cor}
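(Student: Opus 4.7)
The strategy is to simply combine the two main results already proved, namely the optimal varentropy bound (Theorem~\ref{thm:varent}) and the general bootstrapping machinery (Corollary~\ref{cor:cor-mainconcentrate}). Concretely, I would instantiate Corollary~\ref{cor:cor-mainconcentrate} with the value $K = n$, and then apply it to the threshold $nt$ rather than $t$: the upper tail bound becomes $\exp\{-n r(nt/n)\} = \exp\{-n r(t)\}$, and similarly for the lower tail, which matches the statement.

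The only nontrivial verification is that the hypothesis of Corollary~\ref{cor:cor-mainconcentrate} is satisfied with $K = n$; that is, that $\sup_{\alpha > 0} V(X_\alpha) \leq n$. The key observation is that log-concavity of $f$ is preserved under the tilting $f \mapsto f_\alpha$: if $f = e^{-U}$ with $U$ convex, then $f_\alpha \propto e^{-\alpha U}$ with $\alpha U$ convex, so $f_\alpha$ is itself a log-concave density on $\R^n$ for every $\alpha > 0$. Applying Theorem~\ref{thm:varent} to each $X_\alpha$ therefore yields $V(X_\alpha) \leq n$ for all $\alpha > 0$, and the supremum over $\alpha$ is bounded by $n$.

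There is essentially no obstacle, since every ingredient has been prepared in the preceding sections; the main thing to watch is a small bookkeeping point. The quantity $K$ supplied by Theorem~\ref{thm:mainconcentrate} may in principle be strictly smaller than $n$, so one should either (a) observe that the proof of Theorem~\ref{thm:mainconcentrate} only uses the pointwise bound $F''(u) \le K/u^2$, and hence runs verbatim with $K$ replaced by $n$ once we know $\alpha^2 F''(\alpha) \le n$ for every $\alpha > 0$; or (b) note that $K \mapsto K r(t/K) = t - K\log(1 + t/K)$ is decreasing in $K > 0$ (its derivative with respect to $K$ is $u/(1+u) - \log(1+u) \le 0$ for $u = t/K > 0$), so the bound coming from $K \le n$ is only strengthened by replacing $K$ with $n$. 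Either way, the two displayed inequalities of the corollary drop out immediately.
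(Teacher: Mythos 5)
Your proposal is correct and takes essentially the same route as the paper, which simply cites Theorem~\ref{thm:varent} to justify taking $K=n$ in Corollary~\ref{cor:cor-mainconcentrate} and then evaluates the bound at the threshold $nt$. The two points you spell out --- that $f_\alpha \propto e^{-\alpha U}$ is itself a log-concave density so that Theorem~\ref{thm:varent} yields $V(X_\alpha)\le n$ uniformly in $\alpha>0$, and that either the proof of Theorem~\ref{thm:mainconcentrate} only ever uses the upper bound $F''(u)\le K/u^2$ or alternatively $K\mapsto K\,r(s/K)$ is nonincreasing --- are exactly the details the paper leaves implicit, and both are verified correctly.
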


%\section{Additional remarks}
%\label{sec:rmks}

The original concentration of information bounds obtained in \cite{BM11:aop} were suboptimal
not just in terms of constants but also in the exponent; specifically it was proved there that
\be\label{eq:subopt}
\P\left\{\nth\,\big|\h(X) - h(X)\big| \geq t \,\right\} \leq 2\,e^{-ct\sqrt{n}}
\ee
for a universal constant $c>1/16$ (and also that a better bound with $ct^2 n$ in the exponent
holds on a bounded range, say, for $t\in (0,2]$).
One key advantage of the method presented in this paper, apart from its utter simplicity, is
the correct linear dependence of the exponent on dimension. Incidentally, we learnt from a lecture of
B. Klartag \cite{Kla15:ima} that another proof of \eqref{eq:subopt} can be given based on the
concentration property of the eigenvalues of the Hessian of the Brenier map
(corresponding to  optimal transportation from one log-concave density to another)
that was discovered by Klartag and Kolesnikov \cite{KK15};
however, the latter proof shares the suboptimal $\sqrt{n} t$ exponent of \cite{BM11:aop}.

The following inequality is an immediate corollary of Corollary~\ref{cor:specialconcentrate}
since it merely expresses a bound on the support of the distribution of the information content.

\begin{cor}\label{cor:bmrenyi}
Let $X$ have a log-concave probability density function $f$ on $\mathbb{R}^n$. Then:
\bee
h(X)\leq -\log \|f\|_{\infty}+n.
\ene
\end{cor}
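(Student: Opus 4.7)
The plan is to derive the inequality from the degenerate endpoint of the lower-tail bound in Corollary~\ref{cor:specialconcentrate}. Although that corollary superficially looks like a family of probability estimates, at the boundary value $t=1$ it actually encodes a hard support constraint on the information content, and this is exactly what is needed to bound $\|f\|_\infty$.

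First I would apply the lower tail of Corollary~\ref{cor:specialconcentrate} at $t=1$. Since $r(u)=+\infty$ for $u\le -1$, it reads
$$\mathbb{P}[\h(X)-h(X)\le -n]\le e^{-n\,r(-1)}=0,$$
so $\h(X)\ge h(X)-n$ almost surely. Unwinding the definition $\h(X)=-\log f(X)$, this becomes the almost-sure pointwise bound $f(X)\le e^{n-h(X)}$.

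The next step is to promote this almost-sure inequality to a genuine sup-norm bound $\|f\|_\infty\le e^{n-h(X)}$. Here log-concavity is invoked a second time: a log-concave density is continuous on the interior of its (convex) support, so if there existed $x_0$ with $f(x_0)>e^{n-h(X)}$, then $f$ would strictly exceed $e^{n-h(X)}$ on an open neighborhood $U$ of $x_0$ contained in $\supp(f)$. This forces $\mathbb{P}(X\in U)=\int_U f\,dx>0$, contradicting the almost-sure bound. Taking logarithms of $\|f\|_\infty\le e^{n-h(X)}$ then yields $h(X)\le -\log\|f\|_\infty+n$, as claimed.

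The conceptually main step is the first: it is precisely the fact that $r$ takes the value $+\infty$ on $(-\infty,-1]$ that turns the Cram\'er--Chernoff-type tail bound into a deterministic support constraint on the information content. The continuity-of-log-concave-densities argument in the second step is routine and presents no real obstacle, though it is needed to ensure that $\|f\|_\infty$ — a genuine, not merely essential, supremum — is what ends up being controlled.
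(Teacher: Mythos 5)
Your proof is correct and follows essentially the same route as the paper: both apply the lower-tail bound of Corollary~\ref{cor:specialconcentrate} at $t=1$ (equivalently, for $t\geq 1$), note that $r(-1)=+\infty$ forces $\mathbb{P}[\h(X)-h(X)\leq -n]=0$, and convert the resulting almost-sure bound on $\h(X)$ into a bound on $\|f\|_\infty$. Your second step is slightly over-engineered — since $\|f\|_\infty$ is the essential supremum, the almost-sure bound $f(X)\leq e^{n-h(X)}$ already implies $\|f\|_\infty\leq e^{n-h(X)}$ directly (any set where $f$ strictly exceeds the bound would carry positive $f$-mass if it had positive Lebesgue measure), so the continuity argument is not needed and, as written, would not quite cover points $x_0$ on the boundary of the support anyway.
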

\begin{proof}
By Corollary~\ref{cor:specialconcentrate}, almost surely,
\bee
\log f(X) \leq\mathbf{E}[\log f(X)]+n,
\ene
since when $t\geq 1$, $\mathbb{P}[\log f(X)-\mathbf{E}[\log f(X)]\geq nt]=0$.
Taking the supremum over all realizable values of $X$ yields
\bee
\log \|f\|_\infty \leq\mathbf{E}[\log f(X)]+n,
\ene
which is equivalent to the desired statement.
\end{proof}

Corollary~\ref{cor:bmrenyi} was first explicitly proved in \cite{BM11:it}, where several applications
of it are developed, but it is also implicitly contained in earlier work (see, e.g., the proof of Theorem 7 in \cite{FM08:1}).

An immediate consequence of Corollary~\ref{cor:bmrenyi}, unmentioned in \cite{BM11:it}, is a result due to \cite{Fra97}:

\begin{cor}
Let $X$ be a random vector in $\mathbb{R}^n$ with a log-concave density $f$. Then
\bee
\|f\|_{\infty}\leq e^nf(\mathbf{E}[X]).
\ene
\end{cor}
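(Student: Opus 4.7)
The plan is to combine the previous Corollary~\ref{cor:bmrenyi} with Jensen's inequality applied to $\log f$. Indeed, Corollary~\ref{cor:bmrenyi} gives $h(X)\leq -\log\|f\|_\infty + n$, which (using the definition $h(X)=-\mathbf{E}[\log f(X)]$) rearranges to
\[
\log\|f\|_\infty \;\leq\; \mathbf{E}[\log f(X)] + n.
\]
So the only remaining task is to show that $\mathbf{E}[\log f(X)] \leq \log f(\mathbf{E}[X])$.

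This last bound is immediate from Jensen's inequality, since the log-concavity of $f$ means exactly that $\log f$ is a concave function (taking values in $[-\infty,+\infty)$), and so for the integrable random vector $X$ one has $\mathbf{E}[\log f(X)] \leq \log f(\mathbf{E}[X])$. Substituting this into the inequality above and exponentiating yields
\[
\|f\|_\infty \;\leq\; e^n f(\mathbf{E}[X]),
\]
as desired. There is no real obstacle here; the only minor subtlety is that $\log f$ may take the value $-\infty$, but Jensen's inequality remains valid for extended-real concave functions, and if $f(\mathbf{E}[X])=0$ then by concavity $f$ would vanish on a half-space containing the barycenter, forcing $\mathbf{E}[\log f(X)]=-\infty$ and making the inequality trivial (or one may first argue on the interior of $\{f>0\}$, which is convex and contains $\mathbf{E}[X]$ whenever $f(\mathbf{E}[X])>0$).
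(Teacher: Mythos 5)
Your proof is correct and coincides with the paper's own argument: both combine Corollary~\ref{cor:bmrenyi} with Jensen's inequality applied to the concave function $\log f$, then exponentiate. The extra care you take about the possibility that $\log f$ takes the value $-\infty$ is a minor refinement the paper leaves implicit, but the underlying reasoning is identical.
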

\begin{proof}
By Jensen's inequality,
\bee
\log f(\E X)\geq\E[\log f(X)].
\ene
By Corollary~\ref{cor:bmrenyi},
\bee
\E[\log f(X)]\geq \log \|f\|_{\infty}-n.
\ene
Hence,
\bee
\log f(\E X)\geq\log \|f\|_{\infty}-n.
\ene
Exponentiating concludes the proof.
\end{proof}

Finally we mention that the main result may also be interpreted as a small ball inequality
for the random variable $f(X)$. As an illustration, we record a sharp form of \cite[Corollary 2.4]{KM05}
(cf., \cite[Corollary 5.1]{Kla07:1} and \cite[Proposition 5.1]{BM12:jfa}).

\begin{cor}
Let $f$ be a log-concave density on $\mathbb{R}^n$. Then
\bee
\mathbb{P}\{f(X)\geq c^n\|f\|_\infty\} \geq 1-\bigg(e \cdot c\cdot\log\bigg(\frac{1}{c}\bigg)\bigg)^n,
\ene
where $0<c<\frac{1}{e}$.
\end{cor}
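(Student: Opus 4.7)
The plan is to reduce the small-ball statement about $f(X)$ to an upper-tail deviation statement about the information content $\h(X)=-\log f(X)$, and then apply the concentration inequality of Corollary~\ref{cor:specialconcentrate} together with the entropy-vs-$\|f\|_\infty$ bound of Corollary~\ref{cor:bmrenyi}. First, I would rewrite the complementary event: taking $-\log$ of both sides,
\[
\{f(X) < c^n\|f\|_\infty\}
\;=\;
\bigl\{\h(X) > n\log(1/c) + \log(1/\|f\|_\infty)\bigr\}.
\]
So it suffices to control the probability that $\h(X)$ exceeds the right-hand threshold above.

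Next I would translate this into a deviation of $\h(X)$ from its mean $h(X)$. By Corollary~\ref{cor:bmrenyi}, $h(X) \leq \log(1/\|f\|_\infty) + n$, which rearranges to $\log(1/\|f\|_\infty) - h(X) \geq -n$. Consequently the threshold satisfies
\[
n\log(1/c) + \log(1/\|f\|_\infty) - h(X) \;\geq\; n\bigl(\log(1/c) - 1\bigr),
\]
so the event of interest is contained in $\{\h(X) - h(X) > n(\log(1/c) - 1)\}$. The assumption $c < 1/e$ is precisely what makes $t := \log(1/c) - 1$ strictly positive, so we are in the range where the upper-tail bound applies.

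Finally, I would apply the upper-tail bound of Corollary~\ref{cor:specialconcentrate} with this value of $t$. With $1+t = \log(1/c)$, a direct computation gives
\[
r(t) = t - \log(1+t) = \log(1/c) - 1 - \log\log(1/c),
\]
and therefore
\[
e^{-nr(t)} = \bigl(e\cdot c\cdot \log(1/c)\bigr)^n.
\]
Combining the set inclusion with the tail bound yields
\[
\mathbb{P}\{f(X) < c^n\|f\|_\infty\} \;\leq\; \bigl(e\cdot c\cdot \log(1/c)\bigr)^n,
\]
and passing to the complement finishes the proof. There is no genuine obstacle here; the only subtle point is making sure to invoke Corollary~\ref{cor:bmrenyi} to replace the unknown quantity $\log(1/\|f\|_\infty)$ by an expression in $h(X)$ with a controlled error of $n$, which is exactly the slack that produces the factor $e^n$ in the final bound.
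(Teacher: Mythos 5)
Your proof is correct and follows essentially the same route as the paper: reduce the small-ball event on $f(X)$ to an upper-tail deviation of $\h(X)$ from $h(X)$ by invoking Corollary~\ref{cor:bmrenyi}, then apply Corollary~\ref{cor:specialconcentrate} with $t=\log(1/c)-1$. You have simply made explicit the ``elementary algebra'' the paper leaves to the reader.
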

\begin{proof}
Note that
\bee\begin{split}
\mathbb{P}\{f(X)\leq c^n\|f\|_\infty\}
&=\mathbb{P}\{\log f(X)\leq\log \|f\|_\infty+n\log c\}\\
&= \mathbb{P}\{\h(X)\geq -\log \|f\|_\infty-n\log c\} \\
&\leq \mathbb{P} \{\h(X) \geq h(X)-n(1+\log c)\}.
\end{split}\ene
using Corollary~\ref{cor:bmrenyi} for the last inequality. Applying
Corollary~\ref{cor:specialconcentrate} with $t=-\log c -1$ yields
\ben
\mathbb{P}\{f(X)\leq c^n\|f\|_\infty\}
\leq e^{-nr(-1-\log c)} .
\een
Elementary algebra concludes the proof.
\end{proof}

Such ``effective support'' results are useful in convex geometry as they can allow to reduce
certain statements about log-concave functions or measures to statements about convex sets;
they thus provide an efficient route to proving functional or probabilistic analogues of known
results in the geometry of convex sets. Instances where such a strategy is used include
\cite{KM05, BM12:jfa}. These and other applications of the concentration of information
phenomenon are discussed in \cite{MWB15}.

% ------------------------------------------------------------------------

\subsection*{Acknowledgment}
We are indebted to Paata Ivanisvili, Fedor Nazarov, and Christos Saroglou for useful comments on an earlier
version of this paper. In particular, Christos Saroglou pointed out that the class of extremals
in our inequalities is wider than we had realized, and Remark~\ref{rem:extremal} is due to him.
We are also grateful to Fran\c{c}ois Bolley, Dario Cordero-Erausquin and an anonymous referee for pointing out relevant references.

%\bibliographystyle{plain}
%\bibliography{$HOME/Dropbox/MAIN/WRITINGS/CommonResources/pustak}

% ------------------------------------------------------------------------
\end{document}